\documentclass[11pt]{article}
\usepackage{amssymb}
\setlength{\oddsidemargin}{0mm} \setlength{\evensidemargin}{0mm}
\setlength{\topmargin}{-15mm} \setlength{\textheight}{220mm}
\setlength{\textwidth}{155mm}
\usepackage{amsthm}
\usepackage{amsmath}
\usepackage{graphicx}

 \newtheorem{thm}{Theorem}[section]
 
 \newtheorem{lem}[thm]{Lemma}
 
 \theoremstyle{definition}
 
 \newtheorem{rem}[thm]{Remark}
 \numberwithin{equation}{section}

\theoremstyle{definition}

\theoremstyle{remark}

\begin{document}
\title{Remarks of Global Wellposedness of Liquid
Crystal Flows and Heat Flows of Harmonic Maps in Two Dimensions}
\author{Zhen
 Lei\footnote{School of Mathematical Sciences; LMNS and Shanghai
 Key Laboratory for Contemporary Applied Mathematics, Fudan University,
 Shanghai 200433, P. R.China. {\it Email:
 leizhn@gmail.com}}\and Dong Li\footnote{Department of Mathematics,
 University of British Columbia, 1984 Mathematical Road
  Vancouver, BC V6T 1Z2. {\it Email: mpdongli@gmail.com}}\and Xiaoyi
Zhang\footnote{Department of Mathematics,
 University of Iowa, 14 MacLean Hall, Iowa City, USA 52242. {\it Email:
 zh.xiaoyi@gmail.com} }}
\date{\today}
\maketitle

\begin{abstract}
We consider the Cauchy problem to the two-dimensional incompressible
liquid crystal equation and the heat flows of harmonic maps
equation. Under a natural geometric angle condition, we give a new
proof of the global well-posedness of smooth solutions for a class
of large initial data in energy space. This result was originally
obtained by Ding-Lin in \cite{DingLin} and Lin-Lin-Wang in
\cite{LinLinWang}. Our main technical tool is a rigidity theorem
which gives the coercivity of the harmonic energy under certain
angle condition. Our proof is based on a frequency localization
argument combined with the concentration-compactness approach which
can be of independent interest.
\end{abstract}

\maketitle





\section{Introduction}

We consider the following hydrodynamic system modeling the flow of
nematic liquid crystal materials in two dimensions (see, for
instance, \cite{Erickson62, Leslie, LiuLin95}):
\begin{equation}\label{LC}
\begin{cases}
u_t + (u\cdot\nabla) u + \nabla p = \Delta u - \nabla\cdot(\nabla
d\otimes \nabla d),\\[-4mm]\\
d_t + (u\cdot\nabla) d = \Delta d + |\nabla d|^2d,\\[-4mm]\\
\nabla\cdot u = 0,\quad |d| = 1,
\end{cases}
\end{equation}
where $u$ is the velocity field, $p$ is the scalar pressure and
$d=(d_1,d_2,d_3)$ is the unit-vector ($|d| = \sqrt{d_1^2+d_2^2+d_3^2} =1$) on the sphere $\mathbb{S}^2 \subset
\mathbb{R}^3$ representing the macroscopic molecular orientation
of the liquid crystal materials. Here the $i^{th}$ component of
$\nabla\cdot(\nabla d\otimes \nabla d)$ is given by
$\nabla_j(\nabla_i d\cdot \nabla_jd) = \sum_{j=1}^3 \nabla_j(\nabla_i d\cdot \nabla_jd) $. We apply Einstein's
summation convention over repeated indices throughout this paper.
For simplicity, we have set all the positive constants in the
system to be one. We are interested in the Cauchy problem of
\eqref{LC} with the initial data
\begin{equation}\label{data}
u(0, x) = u_0(x),\quad d(0, x) = d_0(x), \qquad \lim_{|x|\to \infty}
d_0(x) = \hat n \in \mathbb S^2,
\end{equation}
where $\hat n$ is a given unit vector.

The above system \eqref{LC} is a simplified version of the
Ericksen-Leslie model for the hydrodynamics of nematic liquid
crystals \cite{Erickson62, Leslie}. The mathematical analysis of
the liquid crystal flows was initiated by Lin and Liu in
\cite{LiuLin95, LiuLin2}. In \cite{Wang}, Wang proved the global existence of
strong solutions for rough initial data  with sufficiently
small ${\rm BMO}^{-1}$ norm (for $u_0$ and $\nabla d_0$). See
also \cite{LiWang} for a small data global existence result in 3D.
For any bounded smooth domain in $\mathbb R^2$, Lin, Lin and Wang \cite{LinLinWang} obtained
 the existence of global weak solutions which are smooth everywhere except on finitely many time slices
 (see also \cite{Hong}). The uniqueness of weak solutions
in two dimensions was studied by \cite{LinWang, Zhang}. Very
recently, a family of non-trivial two-dimensional exact large
solutions was constructed in \cite{DongLei}.

In this paper, we are concerned with the global existence of large classical solutions to \eqref{LC}.  As always with these types of problems
to extend the smooth local solution globally in time one needs to obtain certain a priori estimates.  By the regularity theory in
 \cite{LinLinWang}\footnote{Although the main results in \cite{LinLinWang} are stated for the bounded domains in $\mathbb R^2$,
 it is not difficult to check that the arguments there carry over to the $\mathbb R^2$ case.},  the smooth local solution
 $(u,d)$ to \eqref{LC}  can
 be continued past any time  $T>0$ provided that we have
 \begin{align}
 \int_0^T \Bigl( \| u(t, \cdot ) \|_{L^4}^4 + \| \nabla d(t, \cdot) \|_{L^4}^4 \Bigr) dt <\infty.  \label{criteria}
 \end{align}

The basic energy inequality associated with \eqref{LC} is the
following:
\begin{align}\label{energy}
\frac{1}{2}\big(\|u(t, \cdot)\|_{L^2}^2 + \|\nabla d(t,
\cdot)\|_{L^2}^2\big) & + \int_0^t\Bigl(\|\nabla u(s,\cdot)
\|_{L^2}^2 + \|\Delta d(s,\cdot) + |\nabla d|^2d(s,\cdot)
\|_{L^2}^2\Bigr)ds\\\nonumber \leq&\;
\frac{1}{2}\big(\|u_0\|_{L^2}^2 + \|\nabla d_0\|_{L^2}^2\big),
\qquad\forall\, t \ge 0.
\end{align}
By \eqref{criteria}--\eqref{energy} and the Gagliardo-Nirenberg inequality
\begin{align*}
\| u \|_{L^4(\mathbb R^2)} \le C \cdot \| u\|_{L^2(\mathbb R^2)}^{\frac 12} \cdot \| \nabla u \|_{L^2(\mathbb R^2)}^{\frac 12},
\end{align*}
 it is obvious that
 \begin{align*}
 \| u\|_{L_{t,x}^4([0,T) \times \mathbb R^2)}
  & \le C \cdot \| u \|_{L_t^{\infty} L_x^2([0,T) \times \mathbb R^2)}^{\frac 12}   \cdot \| \nabla u \|_{L_{tx}^2([0,T) \times \mathbb R^2)}^{\frac 12}
  \notag \\
& \le C \cdot  (\|u_0\|_{L^2}+ \| \nabla d_0 \|_{L^2}) <\infty, \qquad \text{for any } T>0.
\end{align*}
Hence the non-blowup criteria \eqref{criteria} can be sharpened to
\begin{align} \label{criteria1}
\int_0^T \| \nabla d(t, \cdot) \|_{L^4}^4 dt <\infty.
\end{align}

Note that in \eqref{LC} if we take $u \equiv 0$ we obtain the usual
heat flows of harmonic maps. The condition \eqref{criteria1} asserts
that the regularity of the whole liquid crystal system \eqref{LC} is
in some sense governed by the pure heat harmonic flow part. For
smooth solutions, the main obstruction in obtaining the a priori
estimate \eqref{criteria1} is that the harmonic energy $E_{harm}(t)
= \| \Delta d(t) + |\nabla d(t) |^2 d(t) \|_{2}^2$ in \eqref{energy}
is not coercive. In particular it yields no useful lower bounds for
general maps $d:\; \mathbb R^2 \to  \mathbb S^2$ (a trivial example
is the constant map).  A natural idea is to introduce some geometric
constraints and work with a set of "restricted maps"  such that the
resulting harmonic energy is coercive.  This idea was first used in
the seminal work of
 Ding-Lin in \cite{DingLin} when they were studying a generalization of
the Eells-Sampson's theorem.  Inspired by \cite{DingLin},  we follow
this line of thought and establish global wellposedness for a family
of initial data under a geometric angle condition. Roughly speaking,
this condition says that the image of the initial orientation vector
$d_0$ is strictly contained in a hemisphere. The set of such maps is
invariant under the dynamics of \eqref{LC} thanks to the maximum
principle.

\begin{thm}\label{thm1}
Denote $e_3=(0,0,1) \in \mathbb S^2$. Let $u_0, \nabla d_0 \in L^2(\mathbb{R}^2)$
with $|d_0| = 1$, $d_0
- e_3 \in L^2(\mathbb{R}^2)$ and satisfy the geometric angle condition:
\begin{align}
\inf_{x\in \mathbb{R}^2}d_{03}
> 0, \label{g_angle}
\end{align}
where $d_{03}$ is the third component of $d_0$.  Then there exists a unique global smooth solution $(d,u)$ to the
incompressible liquid crystal equations \eqref{LC} with the
initial data \eqref{data}. Moreover, one
has
\begin{eqnarray}\label{5}
\int_0^\infty \Bigl( \| \Delta d(t, \cdot) \|_{L^2}^2 + \|\nabla d(t, \cdot)\|_{L^4}^4+ \|\nabla d(t,\cdot) \|_{L^2}^2 \Bigr) dt < \infty.
\end{eqnarray}
\end{thm}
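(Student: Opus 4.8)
The plan is to reduce everything, via the continuation criterion \eqref{criteria1}, to an \emph{a priori} bound on $\nabla d$ in $L^4_{t,x}$; local existence and uniqueness of a smooth solution on a maximal interval $[0,T^*)$ follow from standard parabolic energy estimates together with Gronwall's inequality, so the whole point is to show $T^*=\infty$ and \eqref{5}. The first ingredient is to propagate the geometric angle condition \eqref{g_angle}. The third component solves $\partial_t d_3 + (u\cdot\nabla)d_3 = \Delta d_3 + |\nabla d|^2 d_3$, and since $|\nabla d|^2\ge 0$ enters as a favorable zeroth order coefficient, the parabolic maximum principle gives $\inf_x d_3(t,x)\ge \inf_x d_{03}=:c_0>0$ for all $t\in[0,T^*)$. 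Crucially, this lower bound is invariant under the natural parabolic rescaling, which I will exploit in the blow-up analysis.

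The second ingredient is the coercive gain furnished by the angle condition. Testing the $d_3$-equation against $1$, using $\nabla\cdot u=0$ so that $\int (u\cdot\nabla)d_3\,dx=0$, and $\int\Delta d_3\,dx=0$ (justified from the decay implied by $d_0-e_3\in L^2$), the nonnegative quantity $G(t)=\int_{\bR^2}(1-d_3)\,dx=\tfrac12\|d-e_3\|_{L^2}^2$ satisfies $\dot G=-\int |\nabla d|^2 d_3\,dx\le -c_0\|\nabla d\|_{L^2}^2$. Integrating in time gives the key estimate $\int_0^{T^*}\|\nabla d\|_{L^2}^2\,dt\le c_0^{-1}G(0)<\infty$, which is precisely the time-integrated coercivity of the harmonic energy under \eqref{g_angle}. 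Combined with the dissipation already supplied by \eqref{energy}, namely $\int_0^{T^*}\|\Delta d+|\nabla d|^2 d\|_{L^2}^2\,dt<\infty$, and the pointwise identity $\|\Delta d\|_{L^2}^2=\|\Delta d+|\nabla d|^2 d\|_{L^2}^2+\|\nabla d\|_{L^4}^4$ (which follows from $d\cdot(\Delta d+|\nabla d|^2 d)=0$ and $|d|=1$), controlling $\nabla d$ in $L^4_{t,x}$ becomes equivalent to controlling $\Delta d$ in $L^2_{t,x}$.

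The remaining step, and the main obstacle, is to upgrade these to $\int_0^{T^*}\|\nabla d\|_{L^4}^4\,dt<\infty$. By the two-dimensional Ladyzhenskaya inequality $\|\nabla d\|_{L^4}^2\le C\|\nabla d\|_{L^2}\|\Delta d\|_{L^2}$ and the identity above, one finds $\|\nabla d\|_{L^4}^4\le C\|\nabla d\|_{L^2}^2\bigl(\|\Delta d+|\nabla d|^2 d\|_{L^2}^2+\|\nabla d\|_{L^4}^4\bigr)$; on the large-measure set of times where $C\|\nabla d\|_{L^2}^2\le\tfrac12$ the last term is absorbed and the contribution is bounded by $\int_0^{T^*}\|\Delta d+|\nabla d|^2 d\|_{L^2}^2\,dt<\infty$. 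The difficulty sits on the finite-measure set where $\|\nabla d\|_{L^2}$ is large, where the Dirichlet energy may a priori concentrate. Here I would run the concentration-compactness and frequency-localization argument: if $T^*<\infty$ (equivalently, if the $L^4_{t,x}$ norm were infinite), then by $\varepsilon$-regularity at least a fixed amount $\varepsilon_0=\varepsilon_0(c_0)$ of energy must concentrate at some $(t_n,x_n)$; rescaling parabolically with a concentration scale $\lambda_n\to0$ and passing to a profile limit produces a nonconstant finite-energy harmonic map $\omega:\bR^2\to\bS^2$, the velocity contribution dropping out because $\|\nabla u_n\|_{L^2}^2=\lambda_n^2\|\nabla u\|_{L^2}^2\to0$ under this rescaling. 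Since the angle condition is scale-invariant, $\omega_3\ge c_0>0$, so $\omega$ maps into a closed hemisphere, and the rigidity theorem then forces $\omega$ to be constant, contradicting $\|\nabla\omega\|_{L^2}^2\ge\varepsilon_0$.

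Extracting this bubble cleanly in the presence of the transport term $(u\cdot\nabla)d$ and the pressure-coupled velocity equation, and verifying that the blow-up profile is a genuine harmonic map rather than a nontrivial solution of the coupled system, is where the real work lies; making the profile extraction quantitative through a frequency-localized decomposition is the step I expect to be most delicate. Granting the rigidity theorem and the resulting absence of concentration, we obtain $\int_0^{T^*}\|\nabla d\|_{L^4}^4\,dt<\infty$, so \eqref{criteria1} yields $T^*=\infty$; the three integrals in \eqref{5} are then finite in view of the bound on $\int_0^\infty\|\nabla d\|_{L^2}^2\,dt$, the dissipation in \eqref{energy}, and the identity $\|\Delta d\|_{L^2}^2=\|\Delta d+|\nabla d|^2 d\|_{L^2}^2+\|\nabla d\|_{L^4}^4$.
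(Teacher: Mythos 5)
Your reductions and two of your three ingredients are sound, and one is actually an improvement on the paper. The reduction to the continuation criterion \eqref{criteria1} and the maximum-principle propagation of $d_3 \ge c_0$ are exactly the paper's first steps. Your bound $\int_0^{T^*}\|\nabla d\|_{L^2}^2\,dt \le c_0^{-1}G(0)$, obtained by integrating the $d_3$-equation against $1$ and using $|d-e_3|^2 = 2(1-d_3)$, is correct and is cleaner than the paper's treatment of the same term: the paper derives $\int_0^t\|\nabla d\|_{L^2}^2\,ds<\infty$ only \emph{after} the $L^4$ estimate, via an energy identity for $d-e_3$ and a Gronwall factor $\exp\bigl(\int_0^t\|\Delta d\|_{L^2}^2\,ds\bigr)$, whereas your argument needs nothing but the angle condition. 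Your pointwise identity $\|\Delta d\|_{L^2}^2 = \|\Delta d + |\nabla d|^2 d\|_{L^2}^2 + \|\nabla d\|_{L^4}^4$ also agrees with the paper's computation \eqref{e1248aaa}.

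The genuine gap is at the central step, the bound $\int_0^{T^*}\|\nabla d\|_{L^4}^4\,dt<\infty$. Your Ladyzhenskaya-plus-absorption argument only works on the set of times where $C\|\nabla d(t)\|_{L^2}^2\le \tfrac12$; on the complementary set --- which has finite measure but is precisely where a singularity would form --- you invoke an $\varepsilon$-regularity, parabolic rescaling, and bubble-extraction analysis (essentially the Struwe/Lin--Lin--Wang machinery, with the extra complication of the transport and pressure coupling) that you sketch but do not carry out, and you yourself flag it as ``where the real work lies''; granting ``the resulting absence of concentration'' is granting the theorem. The missing observation is that this machinery is unnecessary: the paper's rigidity theorem (Theorem \ref{rigidity}) is a \emph{static, quantitative} statement, valid for any map with $\|\nabla d\|_{L^2}\le C_0$ and $d_3\ge\epsilon_0$, giving $\|\nabla d\|_{L^4}^4\le(1-\delta_0)\|\Delta d\|_{L^2}^2$ and hence the coercivity \eqref{harm_coer}. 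Since the energy inequality \eqref{energy} gives the uniform bound $\|\nabla d(t)\|_{L^2}\le C_0$ and the maximum principle gives $d_3(t)\ge c_0$ for every $t$, you may apply \eqref{harm_coer} at \emph{every} time slice --- no good/bad time splitting, no rescaling, no bubbling --- and insert it into \eqref{energy} to obtain
\begin{align*}
\frac{\delta_0}{2}\int_0^T\Bigl(\|\Delta d(t)\|_{L^2}^2 + \|\nabla d(t)\|_{L^4}^4\Bigr)dt
\le \frac12\bigl(\|u_0\|_{L^2}^2 + \|\nabla d_0\|_{L^2}^2\bigr), \qquad \forall\, T>0,
\end{align*}
which is \eqref{criteria1} and the first two terms of \eqref{5} at once. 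You use rigidity only qualitatively (a finite-energy harmonic bubble in a hemisphere is constant), which forces you into the profile extraction you cannot complete; used quantitatively, slice by slice, it closes the proof in three lines.
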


\begin{rem}
We emphasize that the result in Theorem \ref{thm1} (and also Theorem \ref{thm2} below)
 can  be obtained by using the method of \cite{LinLinWang}.\footnote{We are grateful to Prof. Fanghua Lin for
 pointing this out to us.}  Our proof here is
  based on a frequency localization argument combined with the concentration-compactness approach
which can be of independent interest.
\end{rem}
\begin{rem}
In Theorem \ref{thm1}, the choice of $e_3$ is for convenience only.
In general one can choose any reference vector $\widehat{n} \in
\mathbb S^2$ and require that the image of $d$ is contained in a
hemisphere around $\widehat{n}$.  We should also point it out that
the geometric angle condition \eqref{g_angle} may be necessary due
to some counterexamples in heat flows of harmonic maps. In
\cite{ChangDingYe-1} the authors proved the global well-posedness of
large solutions to the heat flows of harmonic maps for a class of
initial data with symmetry:
\begin{equation}\nonumber
d_0(x) = \begin{pmatrix}
           x_1r^{-1}\sin\psi_0(r) \\
           x_2r^{-1}\sin\psi_0(r) \\
           \cos\psi_0(r) \\
         \end{pmatrix}
\end{equation}
where
\begin{equation}\nonumber
\psi_0(0) = 0,\ \psi_0(R) < \pi\ {\rm for\ all}\  R > 0.
\end{equation}
Finite time singularities are also shown to exist in the case that
$\psi_0(R) > \pi\ {\rm for\ some}\  R > 0$ (see
\cite{ChangDingYe-2} for more details).
\end{rem}

\begin{rem}
As was shown in the previous remark, there exists a family of blowup
solutions for the heat flows of harmonic maps when the geometric
angle condition is violated. These blowup examples furnish a trivial
($u\equiv0$) set of counterexamples for the liquid crystal system
\eqref{LC}.  A very interesting open question is whether there are
finite time singularities for the incompressible liquid crystal
flows in two dimensions with finite energy and nontrivial velocity.
We note that such nontrivial counterexamples (if they exist) must be
non-radial since the incompressible constraint in the momentum
equation of \eqref{LC} destroys the spherical symmetry.
\end{rem}

By standard local theory (cf.  \cite{LinLinWang} and \cite{SU, Struwe}),
there exists a local smooth solution to
\eqref{LC} with the initial data \eqref{data} under the
assumptions stated in Theorem \ref{thm1}.
 Moreover, by the
weak-strong uniqueness (cf. \cite{LinWang} and
\cite{Zhang}), such a solution is also unique. For simplicity of presentation we will
refrain from proving such local wellposedness results and only work with smooth
local solutions.  The main work then is to prove the a priori estimate \eqref{criteria1}.
For this we will invoke the geometric angle assumption and prove that the corresponding harmonic
energy is coercive.  A crucial step is to establish the rigidity of approximate harmonic
maps (cf. \eqref{e1248}) under the geometric angle condition. We state it as the following

\begin{thm}\label{rigidity}
Let $\epsilon_0 > 0$, $C_0 > 0$. There exists a positive constant
$\delta_0 = \delta_0(\epsilon_0, C_0)$ such that the following
holds:

If $d: \mathbb{R}^2 \rightarrow \mathbb{S}^2$, $\nabla d \in
H^1(\mathbb{R}^2)$ with $\|\nabla d\|_{L^2} \leq C_0$ and $d_{3}
\geq \epsilon_0$, then
\begin{align}
\| \nabla d \|_{L^4}^4 \le (1-\delta_0) \| \Delta d \|_{L^2}^2. \notag
\end{align}

Consequently for such maps the associated harmonic energy is coercive, i.e.
\begin{align} \label{harm_coer}
\|\Delta d + |\nabla d|^2d\|_{L^2}^2 \geq \frac {\delta_0} 2
\Bigl( \|\Delta d\|_{L^2}^2 + \| \nabla d \|_{L^4}^4 \Bigr).
\end{align}
\end{thm}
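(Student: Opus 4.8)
The plan is to reduce the stated inequality to a coercivity bound on the tension field and then establish that bound by a contradiction argument of concentration--compactness type, using the angle condition to exclude the only possible limiting objects.

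The starting point is a pointwise algebraic identity. Writing $\tau(d) = \Delta d + |\nabla d|^2 d$ for the tension field, the constraint $|d|\equiv 1$ gives $d\cdot\nabla d = 0$ and $d\cdot\Delta d = -|\nabla d|^2$, so that $\tau(d)$ is exactly the tangential part of $\Delta d$ while $-|\nabla d|^2 d$ is the normal part. By orthogonality and $|d|=1$,
\begin{align}
\|\Delta d\|_{L^2}^2 = \|\Delta d + |\nabla d|^2 d\|_{L^2}^2 + \|\nabla d\|_{L^4}^4. \notag
\end{align}
This identity already yields the consequence \eqref{harm_coer} from the main inequality by elementary algebra (if $\|\nabla d\|_{L^4}^4 \le (1-\delta_0)\|\Delta d\|_{L^2}^2$ then $\|\tau(d)\|_{L^2}^2 \ge \delta_0\|\Delta d\|_{L^2}^2 \ge \tfrac{\delta_0}{2}(\|\Delta d\|_{L^2}^2 + \|\nabla d\|_{L^4}^4)$). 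It also shows the main inequality is equivalent to a lower bound $\|\tau(d)\|_{L^2}^2 \ge c_0 \|\nabla d\|_{L^4}^4$ with $c_0=c_0(\epsilon_0,C_0)>0$, i.e. to a quantitative gap between almost-harmonic maps and genuine harmonic maps.

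To obtain this, I would argue by contradiction. A failing sequence produces maps $d_n$ with $(d_n)_3\ge \epsilon_0$, $\|\nabla d_n\|_{L^2}\le C_0$, and ratio tending to the trivial bound. Since the whole problem is invariant under the conformal scaling $d\mapsto d(\lambda\cdot)$ (both $\|\nabla d\|_{L^4}^4$ and $\|\Delta d\|_{L^2}^2$ scale like $\lambda^2$, while $\|\nabla d\|_{L^2}$ and the angle condition are scale invariant), I normalize $\|\nabla d_n\|_{L^4}^4 = 1$; the failing ratio then forces $\|\tau(d_n)\|_{L^2}\to 0$ and $\|\Delta d_n\|_{L^2}$ bounded, so that $\nabla d_n$ is bounded in $H^1(\mathbb{R}^2)$. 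A key simplifying observation is that this $\dot H^2$-bound rules out small-scale concentration: a bubble forming at scale $r_n\to 0$ would force $\|\Delta d_n\|_{L^2}\to\infty$. Hence the only possible loss of compactness is translation. By Lions' lemma, the normalization $\|\nabla d_n\|_{L^4}^4=1$ precludes vanishing, so after translating we may assume $\int_{B_1(0)}|\nabla d_n|^2\ge\beta>0$. Extracting a subsequence, $\nabla d_n \rightharpoonup \nabla d_\infty$ weakly in $H^1$ with strong local convergence in $L^2$ and $C^0_{loc}$ (Rellich in two dimensions), giving a limit $d_\infty:\mathbb{R}^2\to\mathbb{S}^2$ that is \emph{non-constant} (its energy on $B_1$ is at least $\beta$), satisfies $(d_\infty)_3\ge\epsilon_0$ and has finite energy. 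Since $\tau(d_n)\to 0$ in $L^2$ and the local convergence suffices to pass $|\nabla d_n|^2 d_n$ to the limit, $d_\infty$ is a weakly harmonic map.

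The crux, and the step I expect to be the main obstacle, is the Liouville-type rigidity: a non-constant finite-energy harmonic map $\mathbb{R}^2\to\mathbb{S}^2$ cannot have image in a hemisphere. Here I would invoke Hélein's regularity theorem to upgrade $d_\infty$ to a smooth harmonic map, and the Sacks--Uhlenbeck removable singularity theorem (using conformal invariance of the Dirichlet energy) to extend it to a smooth harmonic map $\mathbb{S}^2\to\mathbb{S}^2$. The third component then satisfies $\Delta (d_\infty)_3 = -|\nabla d_\infty|^2 (d_\infty)_3 \le 0$, so it is superharmonic on the closed surface $\mathbb{S}^2$; integrating over $\mathbb{S}^2$ gives $\int |\nabla d_\infty|^2 (d_\infty)_3 = 0$, and since $(d_\infty)_3\ge\epsilon_0>0$ this forces $\nabla d_\infty\equiv 0$. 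This contradicts non-constancy and completes the argument, the uniform $\delta_0$ emerging from the compactness. The delicate points to be handled with care are the exclusion of concentration via the $\dot H^2$-bound, the justification of the translation and passage to the weakly harmonic limit, and the invocation of the regularity/removable-singularity machinery that legitimizes the final maximum-principle computation.
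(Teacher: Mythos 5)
Your proposal is correct, and it reaches the theorem by a genuinely different route than the paper in both key technical steps, even though the skeleton (algebraic reduction plus compactness-by-contradiction) is the same. The algebraic part is identical: your pointwise orthogonal splitting of $\Delta d$ into the tangential part $\tau(d)$ and the normal part $-|\nabla d|^2 d$, giving $\|\Delta d\|_{L^2}^2 = \|\tau(d)\|_{L^2}^2 + \|\nabla d\|_{L^4}^4$, is exactly the paper's computation \eqref{e1248aaa}. The divergence begins with the compactness mechanism: you invoke Lions' vanishing lemma for the $H^1$-bounded sequence $\nabla d_n$ and, after translation, get strong $L^2_{loc}$ and $C^0_{loc}$ convergence (Rellich) to a \emph{non-constant} limit map $d_\infty$, which you then show is weakly harmonic; the paper instead proves two self-contained frequency-localization lemmas (Bernstein estimates killing the pieces $P_{<1/N}$ and $P_{>N}$ of $|\nabla d_n|^2$, plus a non-evacuation lemma), whose output is a nontrivial weak $L^2$ limit $g$ of the energy densities $|\nabla d_n|^2$ --- so in the paper the nontriviality is carried by $g$, and the limit \emph{map} is never shown to be non-constant, nor even harmonic. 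The second divergence is the rigidity mechanism: you upgrade $d_\infty$ to a smooth harmonic map $\mathbb{S}^2\to\mathbb{S}^2$ via H\'elein's regularity theorem and the Sacks--Uhlenbeck removable singularity theorem, then integrate the superharmonic third component over the closed surface; the paper avoids all regularity theory, extracting only the scalar distributional inequality $\Delta d_3^* + \epsilon_0\, g \le 0$ from the third-component equation and applying the classical Liouville theorem for bounded superharmonic functions on $\mathbb{R}^2$, which forces $d_3^*$ constant and hence $g\equiv 0$, a contradiction. Your route buys conceptual clarity --- it identifies the obstruction as the known fact that a non-constant finite-energy harmonic map cannot have image in an open hemisphere --- at the price of two substantial black boxes; the paper's route is elementary and self-contained. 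It is worth noting that within your own framework the heavy machinery is dispensable: $(d_\infty)_3$ is bounded and distributionally superharmonic on $\mathbb{R}^2$, hence constant by the same Liouville theorem, and then $|\nabla d_\infty|^2 (d_\infty)_3 = -\Delta (d_\infty)_3 = 0$ together with $(d_\infty)_3\ge\epsilon_0$ forces $\nabla d_\infty \equiv 0$, contradicting non-constancy directly --- no H\'elein or Sacks--Uhlenbeck needed.
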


\begin{rem}
Applying the method of \cite{DingLin} one can even obtain  a stronger rigidity
theorem.  We thank Prof. Fanghua Lin for pointing this out. Our approach
here is based on frequency localization combined with concentration-compactness.
\end{rem}

Note that our rigidity Theorem \ref{rigidity} is purely "static" and
it has nothing to do with the velocity field of the fluid dynamics.
As such it is stated for any map $d: \mathbb{R}^2 \rightarrow
\mathbb{S}^2$ under general assumptions. By using essentially the
same arguments,  we also have a similar result for the heat flows of
harmonic maps:
\begin{equation}\label{HHM}
d_t  = \Delta d + |\nabla d|^2d,
\end{equation}
where $d$ is still a unit-vector on the sphere $\mathbb{S}^2
\subset \mathbb{R}^3$ (see \cite{DingLin} for a generalization of
Eells-Sampson's theorem). We state the result as the following
\begin{thm}\label{thm2}
Let $\nabla d_0 \in L^2(\mathbb{R}^2)$ with $|d_0| = 1$, $d_0 - e_3
\in L^2(\mathbb{R}^2)$ and $\inf_{x\in \mathbb{R}^2}d_{03} > 0$.
Then there exists a unique global smooth solution to the heat flows
of harmonic maps \eqref{HHM} with the initial data $d(0, x) =
d_0(x)$. Moreover, \eqref{5} is satisfied.
\end{thm}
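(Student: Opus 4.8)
The plan is to reduce everything to the single a priori bound \eqref{5}. Indeed, the part of \eqref{5} controlling $\int_0^\infty \|\nabla d\|_{L^4}^4\,dt$ is exactly the $u\equiv 0$ analogue of the continuation criterion \eqref{criteria1}, so once \eqref{5} is established the smooth local solution extends to all times; uniqueness then follows from the weak-strong uniqueness results already cited. Throughout I would work only with the smooth local solution given by the local theory and concentrate on propagating the quantitative estimates.

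First I would record the basic energy identity for \eqref{HHM}. Taking the $L^2$ inner product of \eqref{HHM} with $\partial_t d$ and using $|d|\equiv 1$ (hence $d\cdot\partial_t d=0$, which kills the reaction term) together with integration by parts yields
\[
\frac{d}{dt}\Big(\tfrac12\|\nabla d(t)\|_{L^2}^2\Big) = -\|\Delta d + |\nabla d|^2 d\|_{L^2}^2 =: -E_{harm}(t).
\]
Integrating in time shows the Dirichlet energy is nonincreasing, so $\|\nabla d(t)\|_{L^2}\le \|\nabla d_0\|_{L^2}=:C_0$ for all $t$, and moreover $\int_0^\infty E_{harm}(t)\,dt \le \tfrac12\|\nabla d_0\|_{L^2}^2<\infty$.

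Next I would show the geometric angle condition is preserved and then feed it into the rigidity theorem. The third component satisfies the scalar equation $\partial_t d_3 = \Delta d_3 + |\nabla d|^2 d_3$, whose zeroth-order coefficient $|\nabla d|^2$ is nonnegative; since $d_3\to 1$ at spatial infinity (because $d-e_3\in L^2$ together with the decay of the smooth solution) and $d_3(0)\ge\epsilon_0>0$, the maximum principle gives $d_3(t,x)\ge\epsilon_0$ for all $t\ge 0$. Thus at each fixed time $d(t,\cdot)$ satisfies exactly the hypotheses of the rigidity Theorem \ref{rigidity} with the uniform constants $\epsilon_0$ and $C_0$, so \eqref{harm_coer} applies pointwise in $t$ and gives $E_{harm}(t)\ge \tfrac{\delta_0}{2}\big(\|\Delta d(t)\|_{L^2}^2 + \|\nabla d(t)\|_{L^4}^4\big)$. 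Integrating this against the bound from the previous step controls $\int_0^\infty\big(\|\Delta d\|_{L^2}^2+\|\nabla d\|_{L^4}^4\big)\,dt$.

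For the remaining term $\int_0^\infty\|\nabla d\|_{L^2}^2\,dt$ in \eqref{5}, I would exploit the algebraic identity $1-d_3=\tfrac12|d-e_3|^2\ge 0$, valid since $|d|=1$. Integrating $\partial_t(1-d_3)=\Delta(1-d_3)-|\nabla d|^2 d_3$ over $\mathbb R^2$ (the Laplacian term vanishing by decay) and using $d_3\ge\epsilon_0$ gives
\[
\frac{d}{dt}\int_{\mathbb R^2}(1-d_3)\,dx = -\int_{\mathbb R^2}|\nabla d|^2 d_3\,dx \le -\epsilon_0\|\nabla d\|_{L^2}^2,
\]
whence $\epsilon_0\int_0^\infty\|\nabla d\|_{L^2}^2\,dt \le \int_{\mathbb R^2}(1-d_{03})\,dx = \tfrac12\|d_0-e_3\|_{L^2}^2<\infty$. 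The main technical care, and the step I expect to be most delicate, lies in the non-compact-domain issues: rigorously justifying the maximum principle for $d_3$ and the vanishing of boundary terms in the integrations by parts above requires adequate spatial decay of the smooth solution, which I would extract from the function-space framework of the local theory. Once \eqref{5} is in hand, the continuation criterion closes the global existence argument.
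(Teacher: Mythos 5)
Your proposal is correct, and its skeleton---reduction to the $u\equiv 0$ analogue of the continuation criterion \eqref{criteria1}, the basic energy identity, preservation of the angle condition by the maximum principle, and coercivity of the harmonic energy via Theorem \ref{rigidity}---is exactly the paper's argument (the paper proves Theorem \ref{thm1} in detail and declares Theorem \ref{thm2} a repetition of it). Where you genuinely depart is in the last term of \eqref{5}, the bound on $\int_0^\infty\|\nabla d\|_{L^2}^2\,dt$. The paper tests the equation for $d-e_3$ against $d-e_3$ in $L^2$, estimates the nonlinear term by $\|\nabla d\|_{L^4}^2\|d-e_3\|_{L^2}\lesssim \|\nabla d\|_{L^2}\|\Delta d\|_{L^2}\|d-e_3\|_{L^2}$, and closes with Gronwall, so its bound carries the factor $\exp\bigl(\int_0^t\|\Delta d\|_{L^2}^2\,ds\bigr)$ and relies on the rigidity-derived estimate \eqref{4}. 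You instead integrate the scalar equation for $1-d_3=\tfrac12|d-e_3|^2$ over $\mathbb{R}^2$ and use the sign condition $d_3\ge\epsilon_0$ to obtain the linear-in-data bound $\epsilon_0\int_0^\infty\|\nabla d\|_{L^2}^2\,dt\le \tfrac12\|d_0-e_3\|_{L^2}^2$, with no Gronwall, no interpolation, and---notably---no use of the rigidity theorem in this step. What the paper's route buys is robustness: all manipulations stay inside the $L^2$ energy framework and are justified in the standard way without pointwise spatial decay. Your route needs the vanishing of $\int\Delta(1-d_3)\,dx$, i.e., a genuine decay or cut-off justification, which you correctly flag as the delicate point; it can be handled by multiplying by a cutoff $\chi_R$ and noting
\begin{align*}
\Bigl|\int_{\mathbb{R}^2}(1-d_3)\Delta\chi_R\,dx\Bigr|\lesssim R^{-2}\,\|d(t)-e_3\|_{L^2}^2,
\end{align*}
where finiteness of $\|d(t)-e_3\|_{L^2}$ on compact time intervals comes from the local theory, so the remaining issue is technical rather than structural.
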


The proof of Theorem \ref{thm2}  will be omitted since it is essentially a repetition of the proof of
Theorem \ref{thm1}.

We close this introduction by
 setting up some notations and conventions which will be used in this paper.

\subsubsection*{Notations}

  For any two quantities $A$ and $B$,  we use $A \lesssim B$ (resp. $A
\gtrsim B$ ) to denote the inequality $A \leq CB$ (resp. $A \geq
CB$) for a generic positive constant $C$. The dependence of $C$ on
other parameters or constants are usually clear from the context
and we will often suppress  this dependence. The value of $C$ may
change from line to line.  For any function $f:\; \mathbb R^2\to
\mathbb R$, we use $\|f\|_{L^p}$ or sometimes $\|f\|_p$ to denote the  usual Lebesgue
$L^p$ norm of a function for $1 \le p \le \infty$.
We use $L_t^pL_x^r$
to denote the space-time norm
\begin{align*}
\| f\|_{L_t^pL_x^r} =\Bigl( \int_{\mathbb R} \bigl( \int_{\mathbb R^2} |f(t,x)|^r dx
\bigr)^{\frac pr} \Bigr)^{\frac 1p},
\end{align*}
with the usual modifications when $p$ or $r$ is infinity, or when the
domain $\mathbb R\times \mathbb R^2$ is replaced by a space-time slab.
When $p=r$ we abbreviate $L_t^p L_x^r$ by $L_{t,x}^p$ or $L_{tx}^p$.

Occasionally we
will need use the fractional Laplacian operator $|\nabla|^s$,
$s>0$, which is defined via Fourier transform as
\begin{align*}
\mathcal F (  |\nabla|^s f ) (\xi) = |\xi|^s (\mathcal F f)(\xi), \qquad \xi \in \mathbb R^2.
\end{align*}
The homogeneous Sobolev norm $\dot H^{s}$ for any $s>0$ is defined as
$\| f \|_{\dot H^{s} } = \| |\nabla|^s f \|_{2}$ or more explicitly:
\begin{align*}
\| f \|_{\dot H^s} = \Bigl( \int_{\mathbb R^2} |\xi|^{2s}  |(\mathcal F f)(\xi)|^2 d\xi \Bigr)^{\frac 12}.
\end{align*}

We will need to use the Littlewood-Paley (LP) frequency projection operators. For simplicity
we shall fix the notations on $\mathbb R^2$, but it is straightforward to define everything
in $\mathbb R^d$ for any $d\ge 1$.
To fix the notation let $\phi \in C_0^\infty(\mathbb{R}^2)$ and satisfy
\begin{equation}\nonumber
0 \leq \phi \leq 1,\quad \phi(x) = 1\ {\rm for}\ |x| \leq 1,\quad
\phi(x) = 0\ {\rm for}\ |x| \geq 2.
\end{equation}
For two real positive numbers $\alpha < \beta$, define the
frequency localized (LP) projection operator $P_{\alpha <\cdot<\beta}$ by
\begin{equation}\nonumber
P_{\alpha <\cdot<\beta}f =
\mathcal{F}^{-1}\big([\phi(\beta^{-1}\xi) -
\phi(\alpha^{-1}\xi)]\mathcal{F}(f)\big).
\end{equation}
Here $\mathcal{F}$ and $\mathcal{F}^{-1}$ denote the Fourier
transform and its inverse transform, respectively. Similarly, the
operators $P_{< \alpha}$ and $P_{> \beta}$ are defined by
\begin{equation}\nonumber
P_{<\beta}f =
\mathcal{F}^{-1}\big(\phi(\beta^{-1}\xi)\mathcal{F}(f)\big),
\end{equation}
and
\begin{equation}\nonumber
P_{> \alpha}f = \mathcal{F}^{-1}\big([1 -
\phi(\alpha^{-1}\xi)]\mathcal{F}(f)\big).
\end{equation}

We recall the following Bernstein estimates:  for any $1\le p\le
q\le \infty$ and dyadic $N>0$,
\begin{align*}
\| P_{<N} f\|_{L_x^q (\mathbb R^2)} \lesssim N^{\frac 2p -\frac 2q}
\| f\|_{L_x^p(\mathbb R^2)}.
\end{align*}


\section{Rigidity}

In this section we prove our rigidity Theorem \ref{rigidity}.  Our proof is
purely analytic, and uses in a quantitative way the geometric angle condition.
The key ingredient of the proof is a  frequency localization
argument and a concentration-compactness procedure.



We begin with the following simple lemma which locks the nontrivial
$L^2$ weak limit of an $L^2$ bounded  sequence of functions whose frequency is essentially
localized to a (large) ring. The proof is stated for $\mathbb R^2$ but it naturally generalizes to $\mathbb R^d$ for any dimension
$d\ge 1$.

\begin{lem}[Non-evacuation of $L^2$-norm]
\label{lem111}
Let $C_0$ and $\alpha$ be two positive constants, and $N$ be a
dyadic integer. Suppose that $f_n: \; \mathbb R^2 \to \mathbb R$
is a sequence of functions such that
\begin{align*}
\| f_n \|_{L^2} \le C_0, \quad\forall\; n,
\end{align*}
and
\begin{align}
\| P_{\frac 1 N<\cdot <N} f_n \|_{L^\infty} \ge \alpha>0,
\quad\forall\, n. \label{e1200}
\end{align}
Then one can find a  subsequence (which we still label as $f_n$) and
centers $x_n \in \mathbb R^2$, such that
\begin{align*}
f_n(x_n - \cdot) \rightharpoonup f \quad\text{weakly in $L^2$},
\end{align*}
and
\begin{align*}
\| f \|_{L^2} \gtrsim 1.
\end{align*}

\end{lem}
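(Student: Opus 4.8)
**

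The plan is to extract a nontrivial weak limit by following the mass concentrated at frequencies in the ring $\{\frac{1}{N} < |\xi| < N\}$. The key observation is that the hypothesis \eqref{e1200} gives a pointwise lower bound on $g_n := P_{\frac 1N < \cdot < N} f_n$, while the $L^2$ bound on $f_n$ gives (by Plancherel, since the frequency projection is bounded on $L^2$) a uniform bound $\|g_n\|_{L^2} \le C_0$. Because $g_n$ has frequency support in a fixed annulus, its Fourier transform is supported in a compact set, so $g_n$ is a nice band-limited function: all its derivatives are controlled by its $L^2$ norm via Bernstein, and in particular $\|g_n\|_{L^\infty}$ and $\|\nabla g_n\|_{L^\infty}$ are bounded uniformly in $n$. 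This compactness in frequency is what converts the $L^\infty$ lower bound into genuine mass at a definite location.

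First I would choose, for each $n$, a point $y_n \in \mathbb{R}^2$ where $|g_n(y_n)| \ge \alpha/2$ (such a point exists by \eqref{e1200}). Then I would set $x_n = y_n$ and consider the translates $g_n(x_n - \cdot)$, whose value at the origin is $\ge \alpha/2$ in absolute value. The uniform Lipschitz bound from Bernstein means that on a ball $B(0,r)$ of fixed radius $r$ (depending only on $\alpha$, $N$, $C_0$), we have $|g_n(x_n - x)| \ge \alpha/4$ for all such $n$. Integrating over this fixed ball gives a uniform lower bound
\begin{align*}
\| g_n(x_n - \cdot) \|_{L^2}^2 \ge \int_{B(0,r)} \frac{\alpha^2}{16}\, dx \gtrsim 1,
\end{align*}
with the implied constant depending only on $\alpha$, $N$, $C_0$.

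Next I would pass to a weak limit. The translated sequence $f_n(x_n - \cdot)$ is bounded in $L^2$, so by weak compactness of bounded sets in the Hilbert space $L^2$ there is a subsequence with $f_n(x_n - \cdot) \rightharpoonup f$ weakly. I then claim $\| f\|_{L^2} \gtrsim 1$. Since the Fourier multiplier $P_{\frac 1N < \cdot < N}$ is a bounded self-adjoint operator on $L^2$, it is weak-to-weak continuous, and it commutes with translation, so $g_n(x_n - \cdot) = P_{\frac 1N < \cdot < N}\bigl( f_n(x_n - \cdot)\bigr) \rightharpoonup P_{\frac 1N < \cdot < N} f$ weakly. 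Testing the weak convergence against a fixed smooth bump $\chi$ supported on $B(0,r)$ with $\chi \equiv 1$ on the smaller ball, and using the pointwise lower bound above, transfers the nonevacuation of mass to the limit: the pairing $\langle g_n(x_n - \cdot), \chi\rangle$ stays bounded away from zero (after possibly passing to a further subsequence to fix the sign of $g_n(y_n)$), hence $P_{\frac 1N < \cdot < N} f \ne 0$, and therefore $f \ne 0$ with $\|f\|_{L^2} \ge \|P_{\frac 1N < \cdot < N} f\|_{L^2} \gtrsim 1$.

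The main obstacle is making the transfer of the lower bound to the weak limit fully rigorous: weak convergence alone does not preserve $L^2$ norms (it is only lower semicontinuous, which is the favorable direction here), so the genuine content is producing a \emph{fixed} test function against which the pairings are uniformly bounded below. This is exactly where the band-limited nature of $g_n$ is essential — without the uniform Lipschitz control from Bernstein, the $L^\infty$ lower bound could be realized on a shrinking set carrying vanishing mass, and no weak limit would survive. Handling the sign of $g_n(y_n)$ (passing to a subsequence so it is, say, always positive) and choosing $r$ correctly in terms of the Bernstein constants are the only mildly delicate points.
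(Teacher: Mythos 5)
Your proof is correct, but it realizes the crucial step differently from the paper, and in a more roundabout way. Both arguments start identically: pick $x_n$ with $|(P_{\frac 1N<\cdot<N}f_n)(x_n)|\ge \alpha/2$, translate, and extract an $L^2$-weak limit $f$ of $f_n(x_n-\cdot)$. The paper then finishes in one line by noting that $(P_{\frac 1N<\cdot<N}f_n)(x_n)=\int_{\mathbb R^2}\psi(y)\,f_n(x_n-y)\,dy$, where $\psi=\mathcal F^{-1}\bigl(\phi(N^{-1}\xi)-\phi(N\xi)\bigr)$ is the fixed Schwartz convolution kernel of the projection: the hypothesis \eqref{e1200} is therefore already a uniform lower bound on the pairing of $f_n(x_n-\cdot)$ against a single $L^2$ test function, so weak convergence plus Cauchy--Schwarz gives $\|f\|_{L^2}\ge \alpha/(2\|\psi\|_{L^2})\gtrsim 1$, with no sign discussion at all (the absolute value of the pairing passes to the limit). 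You instead upgrade the pointwise bound to $|g_n|\ge \alpha/4$ on a ball of fixed radius around $x_n$ via Bernstein/Lipschitz estimates for band-limited functions, fix the sign of $g_n(x_n)$ along a subsequence, test against a bump $\chi$, and use weak-to-weak continuity of the bounded multiplier to identify the weak limit of $g_n(x_n-\cdot)$. This is sound, yields the same type of constant (depending on $\alpha$, $N$, $C_0$), and has the merit of exhibiting genuine mass on a fixed physical-space ball; the cost is extra machinery that the paper's choice of test function makes unnecessary. Two small points you should tighten: (i) $x\mapsto x_n-x$ is a translation \emph{composed with a reflection}, so applying $P_{\frac 1N<\cdot<N}$ to $f_n(x_n-\cdot)$ actually produces the multiplier with reflected symbol $m(-\xi)$; this is harmless because that operator is equally bounded with symbol in $[0,1]$ (or simply take $\phi$ radial, as is standard), but ``commutes with translation'' alone does not justify the identity you use; (ii) your constant-sign argument on the ball presumes $g_n$ is real-valued, which likewise requires the symbol to be even. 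Neither point is a genuine gap, but both deserve a sentence.
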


\begin{proof}
By \eqref{e1200},  we can find $x_n \in \mathbb R^2$ such
that
\begin{align*}
| (P_{\frac 1 N<\cdot <N} f_n)(x_n) | \ge \frac {\alpha}2,
\quad\forall\, n.
\end{align*}
Or more precisely, for $\psi = \mathcal{F}^{-1}\big(\phi(N^{-1}\xi)
- f(N\xi)\big) \in \mathcal S(\mathbb R^2)$, one has
\begin{align}
|\int_{\mathbb R^2} \psi(y) f_n(x_n -y) dy | \ge \frac {\alpha}
2,\quad\forall\, n. \label{e1203}
\end{align}
Since by assumption the sequence $f_n(x_n-\cdot)$ is uniformly
bounded in  $L^2$, there exists a subsequence (we still denote it by
$f_n(x_n-\cdot)$) which converges weakly in $L^2$ to some function
$f$. Clearly, by taking the limit in \eqref{e1203} and using the
Cauchy-Schwartz inequality, one concludes that $\|f\|_{L^2} \gtrsim
1$.
\end{proof}

Our next lemma allows us to remove the translational degrees of
freedom in studying the rigidity property of approximate harmonic
maps from $\mathbb R^2$ to $ \mathbb S^2$ under the geometric angle
condition. By removing the translational degrees of freedom (and
quotienting out other possible non-compact group actions), we can
restore the compactness in the same spirit as the usual
concentration-compactness procedure.

\begin{lem}[Removing translational invariance] \label{lem120}
Let $C_0$ and $\alpha$ be two positive constants. Suppose a
sequence of maps $d_n: \; \mathbb R^2 \to \mathbb{S}^2$, $n\ge 1$ satisfy
the following conditions:
\begin{itemize}
\item $\| \nabla d_n \|_{L^2} + \| \nabla d_n \|_{\dot{H}^1} \le
C_0<\infty, \quad \forall\, n,$ \item $ \| \nabla d_n \|_{L^4}^2
\ge \alpha >0, \quad\forall\, n$.
\end{itemize}
Then, upon a subsequence in $n$ if necessary, we can find a
sequence of points $x_n \in \mathbb R^2$ such that
\begin{align*}
| \nabla d_n(x_n-\cdot ) |^2 \rightharpoonup f, \quad\text{weakly in
$L^2$}
\end{align*}
and $\| f \|_{L^2} \gtrsim 1$.
\end{lem}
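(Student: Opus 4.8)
The plan is to reduce Lemma~\ref{lem120} to the previously established Lemma~\ref{lem111} by setting $f_n = |\nabla d_n|^2$ and verifying its two hypotheses. The function $f_n$ is nonnegative and its $L^2$ norm equals $\|\nabla d_n\|_{L^4}^2$, which is bounded above: by the Gagliardo-Nirenberg interpolation on $\mathbb R^2$ one has $\|\nabla d_n\|_{L^4}^2 \lesssim \|\nabla d_n\|_{L^2}\,\|\nabla d_n\|_{\dot H^1}$, so the first bullet of Lemma~\ref{lem120} gives $\|f_n\|_{L^2} \le C_0'$ uniformly in $n$. Thus $f_n$ satisfies the uniform $L^2$ bound required by Lemma~\ref{lem111}. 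The remaining task is to produce a uniform lower bound of the form \eqref{e1200}, namely to find a dyadic $N$ and a constant $\alpha'>0$ with $\|P_{1/N<\cdot<N} f_n\|_{L^\infty} \ge \alpha' > 0$ for all $n$.

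To extract such a ring-localized $L^\infty$ lower bound I would argue by contradiction, combined with a frequency decomposition. The second bullet gives $\|f_n\|_{L^1} = \|\nabla d_n\|_{L^4}^4 \ge \alpha^2 > 0$ uniformly, so $f_n$ cannot evacuate its mass entirely. Split $f_n = P_{<1/N} f_n + P_{1/N<\cdot<N} f_n + P_{>N} f_n$. The key is to show that for suitable $N$ the low- and high-frequency pieces carry only a controlled fraction of the mass, so that the middle piece must be nontrivial. For the high frequencies, Bernstein and the $\dot H^1$ bound on $\nabla d_n$ give $\|P_{>N} f_n\|_{L^2}$ (or an appropriate $L^1$/$L^\infty$ surrogate) small, uniformly in $n$, as $N\to\infty$; more carefully one controls $\|P_{>N}(|\nabla d_n|^2)\|$ using $\||\nabla|(|\nabla d_n|^2)\|_{L^1}\lesssim \|\nabla d_n\|_{L^2}\|\nabla d_n\|_{\dot H^1} \le C_0^2$, so the high-frequency tail is uniformly negligible. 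For the low frequencies, Bernstein converts the $L^1$ mass $\|f_n\|_{L^1}\le C_0'^2$ into smallness of $\|P_{<1/N} f_n\|_{L^\infty}\lesssim N^{-2}\|f_n\|_{L^1}$, which is uniformly small as $N\to\infty$. Consequently the full $L^\infty$ norm $\|f_n\|_{L^\infty}$ cannot be contributed by the tails, and since a nontrivial portion of the $L^1$ (hence $L^\infty$) mass must sit in the ring $1/N<|\xi|<N$, one obtains the desired lower bound \eqref{e1200}.

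Once \eqref{e1200} is verified, Lemma~\ref{lem111} applies directly to $f_n = |\nabla d_n|^2$: it produces a subsequence, translation centers $x_n \in \mathbb R^2$, and a weak-$L^2$ limit $f$ of $f_n(x_n - \cdot) = |\nabla d_n(x_n - \cdot)|^2$ with $\|f\|_{L^2}\gtrsim 1$. This is precisely the conclusion of Lemma~\ref{lem120}, so the proof closes.

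The step I expect to be the main obstacle is the uniform extraction of the ring lower bound \eqref{e1200}, specifically controlling the frequency tails of the \emph{quadratic} quantity $|\nabla d_n|^2$ rather than of $\nabla d_n$ itself. The subtlety is that $\nabla d_n \in \dot H^1$ does not immediately say $|\nabla d_n|^2$ is frequency-localized, and one must invoke the product/fractional-Leibniz structure to bound $\||\nabla|(|\nabla d_n|^2)\|$ and correctly balance the Bernstein exponents so that both the $N\to\infty$ high-frequency tail and the $N\to\infty$ low-frequency piece become small \emph{simultaneously} while the total $L^1$ mass stays bounded below. Getting these three estimates to cooperate for a single well-chosen $N$, uniformly in $n$, is the crux; everything else is a routine application of the preceding lemma.
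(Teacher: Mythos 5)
Your reduction to Lemma~\ref{lem111} with $f_n=|\nabla d_n|^2$ is exactly the paper's strategy, and your low-frequency and uniform-$L^2$-bound steps are fine; but the two places you yourself flagged as the crux contain genuine errors. First, a norm misidentification that propagates through the argument: $\|f_n\|_{L^1}=\|\nabla d_n\|_{L^2}^2$, \emph{not} $\|\nabla d_n\|_{L^4}^4$; the quantity $\|\nabla d_n\|_{L^4}^4$ is $\|f_n\|_{L^2}^2$. So the second bullet of the lemma is a lower bound on the $L^2$ mass of $f_n$, and it is this $L^2$ mass that must be shown to survive in the frequency ring. Second, and more seriously, your high-frequency estimate controls the wrong norm. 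The Leibniz-type bound $\|\nabla(|\nabla d_n|^2)\|_{L^1}\le 2\|\nabla d_n\|_{L^2}\|\nabla^2 d_n\|_{L^2}\le 2C_0^2$ combined with Bernstein yields only $\|P_{>N}f_n\|_{L^1}\lesssim N^{-1}$, i.e.\ smallness of the tail in $L^1$. That does not imply smallness of $\|P_{>N}f_n\|_{L^2}$ (a high-frequency bubble $\lambda\chi(\lambda x)e^{i\lambda\xi_0\cdot x}$ has $L^1$ norm $O(\lambda^{-1})$ and bounded $\dot W^{1,1}$ norm, yet $L^2$ norm $O(1)$), so you cannot conclude that the ring retains a fixed fraction of the $L^2$ mass. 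Nor does it imply smallness of $\|P_{>N}f_n\|_{L^\infty}$: since $\nabla d_n$ is only bounded in $H^1\not\hookrightarrow L^\infty$, the sup of $f_n$ may sit entirely at frequencies above $N$, uniformly in $n$. Finally, your closing inference that ``a nontrivial portion of the $L^1$ (hence $L^\infty$) mass must sit in the ring'' is a non sequitur: an $L^1$ lower bound never yields an $L^\infty$ lower bound on $\mathbb R^2$.

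The paper closes precisely these holes. For the high frequencies it does not differentiate $f_n$ at all; it uses a paraproduct decomposition,
\begin{align*}
P_{>N}\bigl((\nabla d_n)^2\bigr)=P_{>N}\bigl(P_{>N/8}\nabla d_n\cdot P_{>N/8}\nabla d_n\bigr)+2P_{>N}\bigl(P_{\le N/8}\nabla d_n\cdot P_{>N/8}\nabla d_n\bigr),
\end{align*}
so that every term carries a factor at frequency $\gtrsim N$, and then H\"older plus Sobolev give
$\|P_{>N}f_n\|_{L^2}\lesssim \|P_{>N/8}\nabla d_n\|_{L^4}\|\nabla d_n\|_{L^4}\lesssim \||\nabla|^{1/2}P_{>N/8}\nabla d_n\|_{L^2}\,C_0\lesssim N^{-1/2}C_0^2$,
which is genuine $L^2$ smallness of the tail. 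Together with the Bernstein bound $\|P_{<\frac 1N}f_n\|_{L^2}\le N^{-1}\|f_n\|_{L^1}$ this forces $\|P_{\frac 1N<\cdot<N}f_n\|_{L^2}\ge \alpha/2$. The passage to $L^\infty$ is then by interpolation applied to the \emph{ring piece itself}: $\|P_{\frac 1N<\cdot<N}f_n\|_{L^2}^2\le \|P_{\frac 1N<\cdot<N}f_n\|_{L^1}\,\|P_{\frac 1N<\cdot<N}f_n\|_{L^\infty}\lesssim \|f_n\|_{L^1}\,\|P_{\frac 1N<\cdot<N}f_n\|_{L^\infty}$, giving $\|P_{\frac 1N<\cdot<N}f_n\|_{L^\infty}\gtrsim \alpha^2/C_0^2$, after which Lemma~\ref{lem111} applies. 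Your proposal needs to be repaired along these lines; as written, the high-frequency step and the final $L^\infty$ step do not close.
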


\begin{proof}
Let  $N$ be a dyadic integer which will be taken sufficiently large in the course of the proof. Our main idea is to localize
the $\nabla d_n$-sequence within the frequency window $[1/N, N]$.  After that we apply Lemma \ref{lem111} to find
the nontrivial weak limit.

We first deal with the high frequency piece.  By frequency localization, we have
\begin{align*}
 P_{>N}( (\nabla d_n)^2 )= P_{>N}( P_{>N/8} \nabla d_n \cdot P_{>N/8} \nabla d_n)+ 2P_{>N} (P_{\le N/8} \nabla d_n \cdot P_{>N/8} \nabla d_n).
\end{align*}
By Sobolev embedding, we then have
\begin{align}
\| P_{>N} ( (\nabla d_n)^2 ) \|_{L^2} & \lesssim  \| P_{>N/8} \nabla d_n \|_{L^4} \| \nabla d_n \|_{L^4} \notag \\
& \lesssim  \| |\nabla|^{\frac 12} P_{>N/8} \nabla d_n \|_{L^2} \cdot C_0 \notag \\
& \lesssim  N^{-1/2}  \| \Delta d_n \|_{L^2} \cdot C_0 \notag \\
& \lesssim N^{-1/2} C_0^2 \le \alpha/10 , \label{tmp100}
\end{align}
if we take $N$ large enough.

Similarly for the low frequency part, we use Bernstein's inequality to get
\begin{align}\label{2}
\| P_{<\frac 1N} ( (\nabla d_n)^2 ) \|_{L^2} \le N^{-1}\| (
(\nabla d_n)^2 ) \|_{L^1} \leq \alpha/10,
\end{align}
where again we need to take $N$ large enough.

Now using \eqref{tmp100} and \eqref{2} and the assumption $\|(\nabla
d_n)^2\|_{L^2} \geq \alpha$, we obtain
\begin{align*}
\|P_{\frac{1} {N} < \cdot< N} ( (\nabla d_n)^2 ) \|_{L^2} \geq
\frac {\alpha} 2.
\end{align*}
On the other hand, by H\"older, one has
\begin{align*}
&\|P_{\frac {1} {N} < \cdot< N} ( (\nabla d_n)^2 ) \|_{L^2} \\
& \lesssim \| (\nabla d_n)^2 \|_{L^1}^{\frac 12}  \| \|P_{\frac {1}{N} < \cdot< N} ( (\nabla d_n)^2 ) \|_{L^\infty}^{\frac 12}\\
& \lesssim \|P_{\frac {1}{ N} < \cdot <N} ( (\nabla d_n)^2 )
\|_{L^\infty}^{\frac 12}.
\end{align*}
Obviously
\begin{align*}
\|P_{\frac {1}{ N} < \cdot <N} ( (\nabla d_n)^2 ) \|_{L^\infty}
\gtrsim 1.
\end{align*}
We can then apply Lemma \ref{lem111} to the sequence $f_n= |\nabla
d_n|^2$ to get the result.
\end{proof}

Now we are ready to prove the rigidity Theorem \ref{rigidity}
under the geometric angle condition $d_{3} \geq \epsilon_0$.
\begin{proof}[Proof of Theorem \ref{rigidity}]
First of all, using  a scaling argument $d(x) \to d(\lambda x)$
with $\lambda = \|\Delta d\|_{L^2}^{-1}$, we may assume
\begin{equation}\nonumber
\|\Delta d\|_{L^2} = 1.
\end{equation}

It then suffices to show there exists $\delta_0 =
\delta_0(\epsilon_0, C_0)>0$ such that
\begin{equation}\label{3}
\| \nabla d \|_{L^4}^4 \le 1-\delta_0.
\end{equation}
Assume \eqref{3} does not hold. Then we can find a sequence $d_n: \mathbb{R}^2
\rightarrow \mathbb{S}^2$ such that
$$\|\Delta d_n\|_{L^2} = 1,\quad \|\nabla d_n\|_{L^2}
\leq C_0,\quad d_{n3} \geq \epsilon_0, $$ but $$\| \nabla d_n
\|_{L^4} \nearrow 1,\ {\rm as}\ n\to \infty.$$

Denote
\begin{align}
g_n = \Delta d_n + |\nabla d_n|^2 d_n. \label{e1248}
\end{align}
It is easy to compute that
\begin{align}
\|g_n\|_{L^2}^2 &= \|\Delta d_n\|_{L^2}^2 + \|\nabla
  d_n\|_{L^4}^4 + 2\int|\nabla d_n|^2\Delta d_n\cdot
  d_ndx   \notag \\
&= 1 - \|\nabla d_n\|_{L^4}^4. \label{e1248aaa}
\end{align}
Hence
\begin{eqnarray}\nonumber
\|g_n\|_{L^2} \searrow 0\quad {\rm as }\ n \rightarrow \infty.
\end{eqnarray}
Applying Lemma \ref{lem120} and performing a simple transform to
eliminate the translation invariance if necessary, we conclude that
\begin{eqnarray}\nonumber
|\nabla d_n |^2 \rightharpoonup  g(x),\quad {\rm weakly\ in\ L^2\
as}\ n \rightarrow \infty
\end{eqnarray}
for some $g(x)\ge 0$ and $\|g\|_{L^2} \gtrsim 1$.  Consequently,
by taking the limit $n \rightarrow \infty$ for the equation of
$d_{n3}$ in \eqref{e1248}, we have
\begin{align*}
\Delta d^*_{3} + g(x) \epsilon_0 \le 0,
\end{align*}
where $d_{3}^*$ is the weak limit of the sequence (upon
subsequence if necessary) $d_{n3}$ which satisfies
\begin{equation}\nonumber
\|\Delta d_{3}^*\|_{L^2} \leq 1,\quad \|\nabla d_{3}^*\|_{L^2}
\leq C_0.
\end{equation}
Now $d_{3}^*$ is superharmonic on $\mathbb R^2$ and is bounded.
Obviously $d_{3}^*$ must be a constant map. But since  $\Delta
d^*_{3} + g(x) \epsilon_0 \le 0$, one concludes that $g(x)$ must
be identically $0$. This contradicts to the fact $\|g\|_{L^2}
\gtrsim 1$.

We have obtained the desired contradiction and therefore \eqref{3} holds. Finally \eqref{harm_coer} follows
from \eqref{3}, and the same computation (with $d_n$ replaced by $d$) as in \eqref{e1248} and \eqref{e1248aaa}.

This concludes the proof of Theorem
\ref{rigidity}.
\end{proof}

\section{Proof of Theorem \ref{thm1}}
As was already mentioned in the introduction,  we only need to
establish the a priori estimate \eqref{criteria1}. By using the
maximum principle (applied to the third component of $d_3$), we
have
\begin{equation}\nonumber
\inf_{x \in \mathbb{R}^2}d_3 (t,x)\geq \inf_{x \in \mathbb{R}^2}d_{03}, \qquad \forall\, t
> 0.
\end{equation}
One can apply the rigidity Theorem \ref{rigidity} to conclude that
\begin{align}
\|\Delta d(t) + |\nabla d(t) |^2d(t)\|_{L^2}^2 \geq \frac {\delta_0} 2 \Bigl( \|\Delta
d(t) \|_{L^2}^2 + \| \nabla d(t) \|_4^4 \Bigr), \qquad \forall\, t>0,
\end{align}
where $\delta_0>0$ is the same constant as in Theorem \ref{rigidity}.

Consequently, the basic energy law \eqref{energy} yields
\begin{eqnarray}\label{4}
&&\frac 12 \| \nabla d (t) \|_{L^2}^2+ \frac {\delta_0}
  2\int_0^T\big(\|\Delta d (t) \|_{L^2}^2 + \|\nabla  d(t)
  \|_{L^4}^4 \big)dt\\\nonumber
&&\quad\quad \leq \frac{1}{2}\big(\|u_0\|_{L^2}^2
  + \|\nabla d_0\|_{L^2}^2\big), \qquad \forall\, t\ge 0.
\end{eqnarray}
This clearly settles \eqref{criteria1} and the first two terms in
the estimate \eqref{5}. It remains for us to prove the estimate
\begin{align}
\int_0^{\infty} \| \nabla d (t) \|_{L^2}^2 dt <\infty. \label{etmp146}
\end{align}

By performing the basic energy estimate for the equation of $d -
e_3$ and using the interpolation inequalities, we have
\begin{eqnarray}\nonumber
&&\frac{1}{2}\frac{d}{dt}\|d - e_3\|_{L^2}^2 + \|\nabla
  d\|_{L^2}^2\\\nonumber
&&\leq \int|\nabla d|^2d(d - e_3)dx \leq \|\nabla d\|_{L^4}^2\|d -
  e_3\|_{L^2}\\\nonumber
&&\lesssim \|\nabla d\|_{L^2}\|\Delta d\|_{L^2}\|d -
  e_3\|_{L^2}\\\nonumber
&&\leq \frac{1}{2}\|\nabla
  d\|_{L^2}^2 + C\|\Delta d\|_{L^2}^2\|d -
  e_3\|_{L^2}^2,
\end{eqnarray}
where $C>0$ is an absolute constant.  A Gronwall in time argument then gives
\begin{eqnarray}\nonumber
\|d(t,\cdot) - e_3\|_{L^2}^2 + \int_0^t\|\nabla d\|_{L^2}^2ds \lesssim
\|d_0 - e_3\|_{L^2}^2\exp\Big(\int_0^t\|\Delta d\|_{L^2}^2ds\Big), \qquad \forall\, t\ge 0.
\end{eqnarray}
Noting \eqref{4}, one concludes that
\begin{eqnarray}\nonumber
\int_0^t\|\nabla d\|_{L^2}^2ds < \infty, \qquad \forall\, t\ge 0.
\end{eqnarray}
This obviously implies \eqref{etmp146}.

This concludes the proof of Theorem \ref{thm1}.
Finally we point out that a  further higher order energy estimate also implies that any $H^s$
norm of $(u, \nabla d)$ is uniformly bounded in time for any $s >
0$.


\section*{Acknowledgement}
The authors would like to thank Professor Hongjie Dong and Professor
Fang-hua Lin for many helpful discussions. This work was done when
Dong Li and Xiaoyi Zhang were visiting the School of Mathematical
Sciences of Fudan University during 2011. They would like to thank
the hospitality of the school. Zhen Lei was supported by NSFC
(grants No.11171072 and 11222107), the Foundation for Innovative
Research Groups of NSFC (grant No.11121101), FANEDD, Innovation
Program of Shanghai Municipal Education Commission (grant
No.12ZZ012), NTTBBRT of NSF (No. J1103105), and SGST 09DZ2272900.
Dong Li was supported in part by NSF grant 0908032. D. Li was
supported in part by NSF under agreement No. DMS-1128155. Any
opinions, findings and conclusions or recommendations expressed in
this material are those of the authors and do not necessarily
reflect the views of the National Science Foundation. Xiaoyi Zhang
was supported in part by the Alfred Sloan Fellowship.


\frenchspacing
\bibliographystyle{plain}

\end{document}